\documentclass[12pt]{article}
\usepackage{latexsym}
\usepackage{amssymb}
\usepackage{amsmath}
\usepackage{amsthm}
\usepackage{mathabx}
\usepackage{rotating}
\usepackage{multirow}
\usepackage{mathrsfs}
\usepackage{wasysym}
\usepackage{enumerate}
\usepackage{nicefrac}
\usepackage{rawfonts}
\input{prepictex}
\input{pictex}
\input{postpictex}
\DeclareMathAlphabet{\zap}{OT1}{pzc}{m}{it}
\def\ZZ{\mathbb{Z}}

\def\CP{\mathbb{CP}}
\def\cpbar{\overline{\mathbb{CP}}_2}

\newtheorem{thm}{Theorem}[section]

\newtheorem{prop}[thm]{Proposition}

\def\ZZ{{\mathbb Z}}

\def\CP{{\mathbb C \mathbb P}}

\DeclareMathOperator{\Wh}{Wh}
\begin{document}

\title{Scalar Curvature
and  Product Manifolds}

\author{Claude LeBrun\thanks{Supported 
in part by NSF grant DMS-2203572  and  a Simons Fellowship.}\\ 
Stony
 Brook
 University} 
  
\date{}

\maketitle

\hspace{1.6in}
\begin{minipage}{3.4in}
\begin{quote} {\em In fond memory of  my late friend and wonderful   colleague
 Jim Simons.}
\end{quote}
\end{minipage}

\bigskip 

\begin{abstract} We display   $4$-dimensional counter-examples to a  restricted  form of Rosenberg's $S^1$-Stability 
Conjecture \cite{rosenberg}  recently proposed     by Jie  Xu \cite{jie}. Namely, we show that there are smooth compact $4$-manifolds $M$ with
Euler characteristic $\chi (M) =0$ that  do not admit positive-scalar-curvature  Riemannian metrics, but for which 
the corresponding Cartesian products $M\times S^1$  nevertheless do  admit   such metrics. 
\end{abstract}

\section{Introduction}
 Suppose that $M$ is a smooth closed $n$-manifold. If $M$ admits a positive-scalar-curvature Riemannian metric $g$,
 then  the Cartesian product  $M\times S^1$  admits positive-scalar-curvature  metrics, too, since   the   
 product metric $g+dt^2$ certainly   provides an obvious  example. But what about the ``downward''  direction? If $M\times S^1$
 admits metrics of scalar curvature $s>0$, do such metrics also exist on $M$? This question was codified by Jonathan 
 Rosenberg \cite[Conjecture 1.24]{rosenberg}, who called it the {\em $S^1$-Stability Conjecture}, even though  he  clearly indicated, in the very same paper, that  it fails when $n=4$.
 
This conjecture   may very well have been  inspired by the   Schoen-Yau  method of descent \cite{syrger},  which does at least allow one to  prove {\em something} in this downward direction. 
Let us  henceforth assume, for simplicity, that $M$ is oriented.
Then  for any metric $\widehat{g}$ on $M\times S^1$, geometric measure theory  \cite{federer}  guarantees that the homology class of $M\times \{ pt\} \subset M \times S^1$
is necessarily represented by a mass-minimizing current $\mathscr{Z}$.  If $n\leq 6$, this current can then be expressed \cite{lawson-montreal,simonsreg} as a sum
$$\mathscr{Z}=\sum n_j Z_j$$ 
of  disjoint  compact connected  smoothly  embedded 
oriented
$n$-dimensional hypersurfaces  $Z_j\subset M$ with positive integer multiplicities; and in fact, it was recently shown  \cite{chomanschu} 
that this even works  for $n\leq 10$ if  we also require  $\widehat{g}$
to belong to a certain open dense set of ``generic'' metrics. Because   $\mathscr{Z}$ is mass-minimizing, each  of these $n$-manifolds  is a stable minimal hypersurface  in $(M\times S^1 , \widehat{g})$, 
and the fact that  $\mathscr{Z}$ has homological intersection number $+1$ with $\{ pt\} \times S^1$ implies that at least one of them, 
say $Z=Z_j$, also has the property that the restriction $Z\to M$  of the first-factor projection $M\times S^1 \to M$ has 
 non-zero  degree. If $M\times S^1$ admits
positive-scalar-curvature metrics and $n\leq 10$,  then we can take our ``generic''  metric $\widehat{g}$ to have scalar curvature $s_{\widehat{g}}>0$. 
Letting  $h$ and $\gemini$  respectively denote  the induced metric and second-fundamental form of  $Z$,   Jim  Simons' second-variation formula \cite[Theorem 3.2.2]{simons} for the $n$-dimensional volume $\mathscr{A}$  of $Z$ then  tells us that 
$$
2 \mathscr{A}^{\prime \prime} (0) =\int_Z \left[2 |\nabla u|^2 + 
 (s_h-s_{\widehat{g}} -  |\gemini|^2)u^2\right] \,  d\mu_h
$$
for any smooth function $u\in C^{\infty} (Z)$, thought of as representing a normal variation vector field. The  volume-minimizing property  of ${Z}$ therefore   implies that 
\begin{equation}
\label{jiminy}
 \int_Z \left[2|\nabla u|^2 + 
s_h u^2\right]\, d\mu_h \geq  \int_Z s_{\widehat{g}} \,u^2\, d\mu_h > 0
\end{equation}
for every smooth positive function $u$ on $Z$. 
The inequality  \eqref{jiminy} then implies that $h$ is conformally equivalent to 
a metric of positive scalar curvature.\footnote{If $n=2$, apply \eqref{jiminy} to $u=1$ to show that  $\chi (Z) > 0$ by 
Gauss-Bonnet;   classical uniformization  then implies that the oriented  conformal manifold
$(Z, [h])$
must be  the standard $2$-sphere.  If $n\geq3$,    instead set $\widetilde{h}=u^{p-2}h$ for  $p =2n/(n-2)$, and notice that   \eqref{jiminy} implies that the total scalar curvature 
$\int s_{\widetilde{h}}d\mu_{\widetilde{h}} = \int [(p+2) |\nabla u|^2 + s_h u^2 ]d\mu_h$ 
of $(Z, \widetilde{h})$ 
is  positive for any metric $\widetilde{h}$ in the conformal class $[h]$ of $h$; 
the claim therefore follows, since
 any conformal class contains metrics for which the scalar curvature  does not change sign.} 
 When $n\leq 10$, this  shows that there is a closed oriented Riemannian  $n$-manifold $Z$ of positive scalar curvature
and a smooth map $Z\to M$ of non-zero degree.
 For example,  in   dimension  $n=3$, where the existence of positive-scalar-curvature metrics 
 only depends \cite{bbbpm,gvln2,lott,sy3man} on  homotopy type, this    allows one to conclude that the $S^1$-stability conjecture holds. 
However, in dimension $n=4$, where  the existence of positive-scalar-curvature metrics instead   delicately depends on diffeotype rather than on  homotopy type, 
 the weak sort of information that can be deduced from the method of descent really does  nothing   to  impede  the failure of the conjecture. 
 
 \smallskip 
 
The  main  purpose of the present paper is to provide a  clear negative answer to a question asked by Jie Xu \cite{jie}, who has proposed  a
restricted form of the $S^1$-stability conjecture  that limits  the question to manifolds of  Euler characteristic zero. However, in dimension
$n=4$, we will  see that  counter-examples to even this restricted conjecture still  exist in  profusion:

\begin{thm} \label{melon} 
There are smooth, closed $4$-manifolds $M$ with  $\chi (M)=0$ that do not admit  metrics of positive scalar curvature, but for which the  corresponding products $M\times S^1$
 nevertheless {do}  admit  such  metrics.
\end{thm}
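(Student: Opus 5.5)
Our approach is to take $M$ to be a connected sum of a closed symplectic $4$-manifold $X$ with enough copies of $S^1\times S^3$ to kill the Euler characteristic, arranged so that Seiberg--Witten theory forbids positive scalar curvature on $M$, while $M\times S^1$ is diffeomorphic to a product known to carry positive scalar curvature. Concretely, start with a simply connected minimal complex surface (or symplectic manifold) $X$ with $b_+(X)>1$, e.g. a K3 surface or a quintic hypersurface in $\CP_3$, chosen to be spin with non-vanishing Seiberg--Witten invariant. Since $\chi(X)=2+b_2(X)$, set
$$M = X \# k(S^1\times S^3), \qquad k = 1 + \tfrac{1}{2}b_2(X),$$
so that $\chi(M)=\chi(X)-2k=0$; this works because $b_2(X)$ is even when $X$ is spin, by Rokhlin and the fact that $b_+$ is odd for complex surfaces. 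For K3 we get $k=12$.

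First, $M$ admits no positive-scalar-curvature metric. Connected sum with $S^1\times S^3$ does not change $b_+$, and the Seiberg--Witten invariant of $X\# k(S^1\times S^3)$ equals that of $X$ up to sign; this is the standard gluing fact that summing with a manifold of $b_+=0$ and $b_1>0$ but no $b_2$ preserves the invariant, as for connected sums with $S^1\times S^3$. Alternatively, one can observe that $M$ is a spin manifold with $\hat A(M)=-\sigma(X)/16\neq 0$ for K3, so Lichnerowicz already obstructs positive scalar curvature. This is the cleanest choice: with $X=K3$ we have $\hat A(M)=2\neq0$, so no Seiberg--Witten theory is needed for the obstruction. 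However, $\hat A$ is a spin-cobordism invariant, so the $\hat A$ obstruction would persist to $M\times S^1$ through its $\alpha$-invariant. Hence we must instead use a non-spin or $\hat A=0$ example where only Seiberg--Witten theory obstructs. We therefore take $X$ to be a non-spin surface of general type such as a quintic, or a simply connected elliptic surface $E(n)$ with $n$ odd, and choose $k$ accordingly. If parity forces it, we also sum with copies of $S^2\times S^2$ or $\CP_2\#\overline{\CP}_2$; such summands kill Seiberg--Witten invariants, so we avoid them and instead adjust by choosing $X$ with $b_2$ even.

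Second, $M\times S^1$ admits positive scalar curvature. The key point is that $X$ is h-cobordant, or at least stably related after crossing with $S^1$, to a connected sum $Y$ of copies of $\CP_2$ and $\overline{\CP}_2$ with the same intersection form; by Freedman and Wall such a $Y$ is homeomorphic to $X$. The cleanest route is to use that $X\#(S^1\times S^3)$ and $Y\#(S^1\times S^3)$ become diffeomorphic after crossing with $S^1$. More robustly, we can use Gromov--Lawson and Stolz style bordism theorems: in dimension $5\geq 5$, for simply connected $\pi_1$, or more generally for $\pi_1=F_k$ free, a non-spin manifold admits positive scalar curvature iff a suitable bordism class does, and the relevant obstruction vanishes for totally non-spin manifolds (Rosenberg's result for $\pi_1$ free and the Gromov--Lawson surgery theorem). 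Thus $M\times S^1$, which is totally non-spin with fundamental group $F_k\times\ZZ$, is oriented-bordant over $B(F_k\times\ZZ)$ to $(Y\# k(S^1\times S^3))\times S^1$, which has positive scalar curvature. Surgery in codimension $\geq3$ then transfers positive scalar curvature to $M\times S^1$.

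The main obstacle is this last step: justifying the bordism/surgery argument for the non-simply-connected group $F_k\times\ZZ$. The universal cover is non-spin, so the Gromov--Lawson--Rosenberg theorem reduces the question to oriented bordism $\Omega_5(B\pi)$. There we must check that the class is represented by a psc manifold; since Baum--Connes style arguments apply to free and free-abelian groups, this seems tractable. If it fails, the fallback is an explicit diffeomorphism $X\times S^1\cong Y\times S^1$ coming from an h-cobordism between $X$ and $Y$, which is a product after crossing with $S^1$ since the Whitehead group of $\ZZ$ vanishes.
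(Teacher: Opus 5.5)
Your argument works in outline, but it differs from the paper at both key steps. To rule out positive scalar curvature on $M$, the paper computes $\mathscr{Y}(M)=-4\pi\sqrt{10}$ from two inputs: LeBrun's Yamabe computation for surfaces of general type, and Petean's theorem that summing with $S^1\times S^3$ preserves a negative Yamabe invariant. You use a Seiberg--Witten obstruction on $M$ directly; the paper mentions that alternative only in its closing remarks.

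For positive scalar curvature on $M\times S^1$, the paper proves the stronger fact $M\times S^1\cong N\times S^1$. It takes Wall's $h$-cobordism, crosses it with $S^1$, uses Farrell--Jones to get $\Wh(\mathsf F\times\ZZ)=0$, and applies the $s$-cobordism theorem. You instead use the Gromov--Lawson--Rosenberg bordism theorem for totally non-spin manifolds of dimension $\geq 5$.

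The step you call the main obstacle is in fact immediate, and Baum--Connes plays no role. The class of $M\times S^1$ in $\Omega_5(B\mathsf F\times S^1)$ is $[M\to B\mathsf F]\times[S^1]$. Since $\Omega_4(B\mathsf F)\cong\Omega_4\oplus\bigoplus_k\Omega_3\cong\ZZ$ is detected by the signature, your $(Y\# k(S^1\times S^3))\times S^1$ represents the same class.

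Your route therefore avoids Wall's theorem, Whitehead groups and the $s$-cobordism theorem. It even gives positive scalar curvature on $M\times S^1$ for every closed oriented totally non-spin $4$-manifold $M$ with free $\pi_1$. What it does not give is a diffeomorphism. Your $\Wh(\ZZ)=0$ fallback only yields $X\times S^1\cong Y\times S^1$. Reaching $M\times S^1$ from there needs either an extra surgery step of codimension $\geq 4$, or, as in the paper, $\Wh(\mathsf F\times\ZZ)=0$.

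Three points need repair.

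\emph{The Seiberg--Witten step is imprecise.} Each $S^1\times S^3$ summand raises the expected dimension $\frac14(c_1^2-2\chi-3\tau)$ by one. So on $M$ there is no zero-dimensional count to compare with $SW_X$. What holds is that the invariant evaluated on the $k$ loop classes of $H_1(M)$ recovers $\pm SW_X$, and you would need to cite this. A simpler fix avoids it: Gromov--Lawson surgery on the $k$ circles $S^1\times\{pt\}$ (codimension $3$) would turn a positive-scalar-curvature metric on $M$ into one on $X\# kS^4=X$, contradicting $SW_X\neq 0$.

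\emph{The quintic violates your parity requirement}, since $\chi=55$. The paper fixes this by blowing up once, which preserves Seiberg--Witten invariants. Your $E(n)$ with $n\geq 3$ odd also works.

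\emph{The bordism theorem does not make obstructions vanish for totally non-spin manifolds.} For example, $T^5\#(\CP_2\times S^1)$ is totally non-spin but admits no positive-scalar-curvature metric. The theorem only reduces the question to $\Omega_5(B\pi)$, and that reduction is all you actually use.
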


\noindent 
Here the condition $\chi (M)=0$ of course excludes simply-connected $4$-manifolds, so the proof of Theorem  \ref{melon}  necessarily   involves various   technical tricks that will
 make  it  seem a bit  less main-stream
than the arguments cited  in  \cite{rosenberg}. 

\smallskip

But my real motive for choosing to tell this particular story in the present  context  is that  I believe that it's a tale   that 
would have interested my friend and larger-than-life colleague Jim Simons.  I hope that  the above discussion makes it  clear that 
Jim's  mathematical contributions play a natural and  important role in almost any discussion of this subject. 
But it also gives me a chance to remind people of Jim's amazing role, as departmental  chair,  in creating the 
Stony Brook Mathematics Department, and turning it  into a powerhouse in geometry and topology. The arguments I will describe here crucially depend on  the results of 
 several  of the people that Jim brought to Stony Brook, including  Misha Gromov, Blaine Lawson, and Lowell Jones.
I will also make key use of a  result discovered by  Jimmy Petean, one of my star Stony Brook PhD students, thereby  
illustrating  how my long apprenticeship at Stony Brook has allowed me to learn wonderful geometric ideas from our
amazing graduate students as well as from my senior colleagues. I  thus feel incredibly lucky to have
spent much of my life at Stony Brook, where the air has always been amazingly full of geometry and topology. And I am 
therefore deeply grateful to Jim, whose efforts did so much to create and maintain Stony Brook's vibrant  and stimulating mathematical atmosphere.

\smallskip

Rest in peace, Jim. You are profoundly  missed here,  by all of us! 

\pagebreak 
 
 \section{A Four-Dimensional Counter-Example} \label{core}
 
 Let's begin the proof of Theorem \ref{melon} by carefully discussing a single example. 
 
 \medskip
 
Let $X$ be a smooth  quintic complex  hypersurface in $\CP_3$, which  we could, for example,  take
to be the Fermat quintic 
$$X:= \left\{ [t:u:v:w]\in \CP_3~|~ t^5+u^5+v^5+w^5 =0\right\}.$$
By the Lefschetz hyper-plane section theorem, this smooth compact complex surface is simply connected, 
and by applying adjunction to calculate its Chern and Pontryagin classes, one can easily show  that it has 
 Euler characteristic $\chi (X)= 55$ and signature $\tau (X)= -35$. Freedman's topological classification of simply connected $4$-manifolds \cite{frequin} therefore
 implies that $X$  is homeomorphic to $9 \CP_2 \# 44 \overline{\CP}_2$.  The fact that these two smooth oriented $4$-manifolds are however not  {\em diffeomorphic}   
  was originally proved  using    Donaldson invariants and  the anti-self-dual Yang-Mills equations \cite{don}, although the simpler Seiberg-Witten proof  \cite{witten}  of this fact is more 
  relevant to 
  the following discussion.

 \medskip 
 
  Now let
$Y = X\# \cpbar$ be the blow-up of $X$ at one point, and notice that this non-minimal complex surface of general type \cite{bpv} 
is therefore  homeomorphic to $9 \CP_2 \# 45 \cpbar$, and so in particular has 
 Euler characteristic $\chi = 56$. Finally,  set 
\begin{eqnarray*}
M &=& Y \# 28 (S^1 \times  S^3),\\
N &=& 9 \CP_2 \# 45 \cpbar  \# 28 (S^1 \times  S^3). 
\end{eqnarray*}
It is then easy to see that $M$ and $N$  are again homeomorphic, and that 
$$\chi (M) = \chi (N) = 0.$$
We will now show that $M$ provides a $4$-dimensional counter-example to Xu's restricted $S^1$-stability conjecture, by proving that:

\begin{enumerate}[(i)]
\item $M$ does not admit metrics of positive scalar curvature; while \label{primo} 

\item   $N$ admits metrics of positive scalar curvature; but \label{secondo} 

\item $M \times S^1$ is diffeomorphic to $N \times S^1$,  and therefore  admits metrics of positive scalar curvature. \label{terzo} 
\end{enumerate}

In fact, our proof of \eqref{primo} will actually prove much more:

\begin{prop} 
\label{uno}
The smooth compact $4$-manifold $M$ has negative Yamabe invariant $\mathscr{Y}(M)= - 4 \pi \sqrt{10}$.
Moreover, the  minimax defining this invariant is unachieved, so  that the scalar curvature of any unit-volume metric on M
must  in particular  have  scalar curvature   $<  - 4 \pi \sqrt{10}$ at some point. 
\end{prop}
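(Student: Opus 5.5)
The plan is to compute $\mathscr{Y}(M)$ from both sides and then use the rigidity case of the Seiberg--Witten estimates to see that the infimum is not attained. The minimal model $X$ has $c_1^2(X) = 2\chi + 3\tau = 110 - 105 = 5$. Hence the expected value is $-4\pi\sqrt{2c_1^2(X)} = -4\pi\sqrt{10}$.

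\textbf{Upper bound on $\mathscr{Y}(M)$.} I will invoke the known computation of the Yamabe invariant of complex surfaces of general type together with its extension to connected sums. $X$ is minimal of general type, so $\mathscr{Y}(X) = -4\pi\sqrt{2c_1^2(X)}$, as in LeBrun's Seiberg--Witten estimates. Since $S^1\times S^3$ and $\overline{\CP}_2$ admit positive scalar curvature, they have $\mathscr{Y}>0$. Petean's result, or the Kobayashi-type inequality for connected sums with positive-Yamabe summands, then shows that taking connected sums with such manifolds does not decrease a negative Yamabe invariant. This gives
\[
\mathscr{Y}(M) \geq -4\pi\sqrt{10}.
\]
Concretely, one glues long thin necks onto near-extremal metrics on $X$ and uses Petean's conformal-gluing estimate.

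\textbf{Lower bound on $\mathscr{Y}(M)$.} I will show the opposite inequality $\mathscr{Y}(M)\le -4\pi\sqrt{10}$, i.e.\ $\int s^2\,d\mu \geq 32\pi^2 c_1^2(X) = 160\pi^2$ for every metric, via Seiberg--Witten theory on $M$.

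First, $Y = X\#\overline{\CP}_2$ has nonzero Seiberg--Witten invariants for the spin$^c$ structures with $c_1 = \pm c_1(X) \pm E$, where $E$ is the exceptional class. Adding $S^1\times S^3$ summands preserves nontrivial (e.g.\ Bauer--Furuta-type or $b_1$-extended) invariants; this kills $b_1$ without changing $b_+$. So for every metric $g$ on $M$ there is a solution of the Seiberg--Witten equations for a class whose self-dual part satisfies
\[
(c_1^+)^2 \geq c_1^2(X) = 5.
\]
Choosing the sign of $E$ appropriately relative to the harmonic representative achieves this. The standard estimate $\int s^2\,d\mu \ge 32\pi^2 (c_1^+)^2$ then gives $\int s^2 \ge 160\pi^2$. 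Since $\mathscr{Y}<0$, the Yamabe invariant equals $-\inf\|s\|_{L^2}$, so $\mathscr{Y}(M) \le -4\pi\sqrt{10}$.

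\textbf{Non-attainment.} Suppose some unit-volume metric $g$ has $s_g \geq -4\pi\sqrt{10}$ everywhere. Then $\int s^2 \le 160\pi^2$, so all the inequalities above are equalities. In particular the equality case of the Seiberg--Witten estimate forces $g$ to be Kähler--Einstein with $s$ constant and $(c_1^+)^2 = c_1^2$ realized. A Kähler surface has $b_1$ even, and Kähler–Einstein with $c_1<0$ forces $c_1^2 = 3\chi\cdot\ldots$; more simply, $\pi_1(M)$ is a free group on 28 generators, which is not a Kähler group. Alternatively, the exceptional curve yields a strict inequality via the mismatch of the two chambers. Either route gives a contradiction. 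So every unit-volume metric has $\min s < -4\pi\sqrt{10}$, and the minimax defining $\mathscr{Y}(M)$ is unachieved.

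\textbf{Main obstacle.} The hardest step is the curvature estimate on $M$. It requires both persistence of a nontrivial Seiberg--Witten invariant under the $S^1\times S^3$ summands and handling the blow-up class so that the bound is $c_1^2(X)$ rather than $c_1^2(Y) = 4$. I would first try LeBrun's monopole-class machinery for blow-ups, which gives $(c_1^+)^2 \ge c_1^2$ of the minimal model by choosing the sign of $E$ relative to the metric.
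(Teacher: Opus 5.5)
Your argument is correct in outline, but it takes a different route from the paper.

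\textbf{The paper's route.} It takes $\mathscr{Y}(Y)=-4\pi\sqrt{10}$ from LeBrun's theorem for surfaces of general type. It then applies Petean's surgery theorem, which says that $\#(S^1\times S^3)$ does not change a negative Yamabe invariant. This gives both inequalities at once, with no Seiberg--Witten theory on $M$ itself. Non-attainment is then a numerical check: a metric realizing a non-positive Yamabe invariant would be Einstein, and Hitchin--Thorpe fails because $(2\chi+3\tau)(M)=-108$.

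\textbf{Your route.} You get $\mathscr{Y}(M)\ge -4\pi\sqrt{10}$ from $X$ plus Kobayashi's connected-sum inequality. You get $\mathscr{Y}(M)\le -4\pi\sqrt{10}$ by running LeBrun's blow-up sign trick (choosing $\pm E$ so that $(a^+)^2\ge c_1^2(X)$) directly on $M$. Non-attainment then comes from rigidity in the equality case together with the fact that $M$ is not K\"ahler. This is essentially the ``variation on the standard Seiberg--Witten invariant'' that the paper mentions in its last paragraph. It buys more, namely an $L^2$ scalar-curvature estimate valid for every metric on $M$. But its key input is a genuinely nontrivial theorem that you only gesture at: that $\pm c_1(X)\pm E$ remain monopole classes after adding $28(S^1\times S^3)$. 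The paper's black boxes avoid Seiberg--Witten theory with $b_1>0$ entirely.

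\textbf{Details that need repair.}

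First, your headings are swapped: the part labelled ``upper bound'' proves $\mathscr{Y}(M)\ge -4\pi\sqrt{10}$.

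Second, adding $S^1\times S^3$ summands does not kill $b_1$; it raises it to $28$. For $a=c_1(X)\pm E$ the expected dimension becomes $(4+108)/4=28$, and for a long-neck metric the moduli space looks like $\mathcal{M}_Y\times T^{28}$. So the ordinary invariant vanishes, and one must pair $H^1$-classes of the Picard torus with the moduli space. That is why a variant invariant is needed, and you should cite it precisely, e.g.\ \cite{rio,ozsz}.

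Third, from $s_g\ge -4\pi\sqrt{10}$ with unit volume you cannot conclude $\int s^2\,d\mu\le 160\pi^2$, because $s$ may be large and positive somewhere. There are two fixes:
\begin{itemize}
\item use the Weitzenb\"ock estimate in the sharper form $\int s_-^2\,d\mu\ge 32\pi^2(a^+)^2$, where $s_-=\min(s,0)$; or
\item first pass to the Yamabe metric of $[g]$, using the H\"older bound $Y_{[g]}\ge \min s_g$ for unit volume.
\end{itemize}

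Fourth, equality forces $\nabla\Phi=0$, so $g$ is K\"ahler with constant negative scalar curvature, not necessarily K\"ahler--Einstein. Your $b_1$-parity remark is useless here, since $b_1(M)=28$ is even. What does work is the cup product. The map $\Lambda^2H^1(M)\to H^2(M)$ vanishes, because it factors through $H^2(\mathsf{F})=0$. On a K\"ahler surface with $b_1>0$, by contrast, $[\alpha]\cup[\bar\alpha]\neq 0$ for any nonzero holomorphic $1$-form $\alpha$. This contradiction finishes the non-attainment argument.
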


\begin{proof}
The {\em Yamabe invariant} \cite{lno}  
of a closed  $n$-manifold $M$ is    defined  to be 
\begin{equation}
\label{yamdef}
\mathscr{Y}(M) = \sup_\gamma \inf_{g\in \gamma} \frac{\int_M s_g~d\mu_g}{[ \int_Md\mu_g]^{(n-2)/n}} 
\end{equation}
where $\gamma$ varies over all conformal classes of Riemannian metrics $g$ on $M$; this invariant was originally  introduced  independently  by Kobayashi \cite{okob}
 and Schoen \cite{sch},  who gave it  entirely different  names. It is positive
iff the manifold admits positive-scalar-curvature metrics. 
For the underlying $4$-manifold of any  compact complex surface of general  type, such as the quintic $X$ or its one-point blow-up $Y$,  the Yamabe invariant was systematically calculated 
in \cite[Theorem 7]{lno}, using  a combination of Seiberg-Witten theory and concrete geometric constructions;  it was  shown to be negative, 
to be unchanged by  blowing up,  and to be given by $-4\pi\sqrt{2c_1^2}$ of the minimal model. In our case, we therefore have 
$$\mathscr{Y}(Y)  =\mathscr{Y}(X\# \cpbar)=\mathscr{Y}(X) =-4\pi\sqrt{2 c_1^2 (X)}=-4\pi\sqrt{10}.$$
Indeed, because our  $X$ has $c_1< 0$, it carries a negative-Einstein-constant K\"ahler-Einstein metric \cite{aubin,yau}, and by 
\cite{lno}, this metric actually realizes the Yamabe minimax 
on $X$. On the other hand, \cite[Theorem 7]{lno} guarantees that 
the Yamabe minimax is unattained 
   on the non-minimal complex surface $Y$. 

However,  Petean \cite{jp2} then gave  an  argument  based on surgery that  proved that taking the connected sum  of a  Yamabe-negative  $4$-manifold  with $S^1\times S^3$ does not change  the 
Yamabe invariant. By induction, we therefore have
$$
\mathscr{Y}(M) = \mathscr{Y}(Y \# 28 [S^1 \times  S^3])= \mathscr{Y}(Y) = \mathscr{Y}(X)=-4\pi\sqrt{10}.
$$
Moreover, this Yamabe invariant cannot be attained by any metric; if it were, the  Yamabe metric $g$  attaining 
the minimax  would be Einstein, and this would then contradict the Hitchin-Thorpe inequality \cite{hit,tho},   since 
$$(2\chi + 3 \tau )(M) =  - 108 < 0.$$
The Yamabe constant of every conformal class on $M$ must therefore be less than $\mathscr{Y}(M)$, and the minimum of the scalar curvature of any 
unit-volume metric $g$ on $M$ must therefore be less than $\mathscr{Y}(M)=-4\pi\sqrt{10}$.
\end{proof}


\medskip

\noindent 
Next, we   put our argument on a firm footing by  proving  assertion \eqref{secondo}:

\begin{prop} 
\label{due}
The $4$-manifold $N = 9 \CP_2 \# 45 \cpbar  \# 28 (S^1 \times  S^3)$ admits
Riemannian metrics of positive scalar curvature. 
\end{prop}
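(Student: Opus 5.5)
The plan is to use the Gromov--Lawson surgery theorem: positive scalar curvature is preserved under surgeries of codimension at least $3$, and in particular under connected sums, since a connected sum is a $0$-surgery on a disjoint union, which has codimension $4\geq 3$ in dimension $4$. It therefore suffices to show that each summand of
$$N = 9 \CP_2 \# 45 \cpbar  \# 28 (S^1 \times  S^3)$$
carries a metric of positive scalar curvature. I will then assemble these metrics one connected sum at a time, by induction on the number of summands.

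For the individual pieces, $\CP_2$ with the Fubini--Study metric is Einstein with positive Einstein constant, so it has $s>0$. The same metric with the reversed orientation serves for $\cpbar$, since scalar curvature does not depend on orientation. For $S^1\times S^3$, the product of a round $3$-sphere with a circle of any length has scalar curvature equal to that of the round $S^3$, namely $6>0$ for the unit sphere.

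The Gromov--Lawson construction (equivalently, the Schoen--Yau version) then works as follows. Given positive-scalar-curvature manifolds $P_1$ and $P_2$ of dimension $4$, remove small geodesic balls around a point in each. Deform the metric near each boundary into a long ``torpedo'' neck isometric to a round cylinder $S^3(\epsilon)\times[0,1]$, keeping the scalar curvature positive throughout the bending. Finally glue the two necks together, which gives a metric of $s>0$ on $P_1\# P_2$. Applying this $9+45+28-1=81$ times produces the desired metric on $N$.

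The only delicate point is the bending step in the neck construction, where one must check that the curvature of the small $S^3$ cross-sections, of order $1/\epsilon^2$, dominates the negative contributions coming from the bending of the curve. This is exactly the content of the Gromov--Lawson theorem, so here I would simply cite it rather than redo the estimate. Beyond that citation, the proof reduces to the elementary observation above that each building block has positive scalar curvature.
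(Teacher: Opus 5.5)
Your proposal is correct and follows essentially the same route as the paper: $N$ is obtained from copies of $\CP_2$, $\cpbar$, and $S^1\times S^3$, each carrying a standard metric with $s>0$, by codimension-$4$ surgeries, so the Gromov--Lawson surgery theorem gives a positive-scalar-curvature metric on $N$. Your count of $81$ connected-sum operations and your curvature checks on the building blocks are also fine.
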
 
\begin{proof} From the expression of   $N$   as a connected sum, we immediately see   that it is obtained from a disjoint union 
of copies of $\CP_2$, $\cpbar$, and $S^1 \times S^3$ by performing surgery in codimension $4$. 
Since the  these building-blocks all admit standard metrics of positive scalar curvature, the basic codimension $\geq 3$ surgery 
 result   of Gromov and Lawson \cite[Theorem A]{gvln}  therefore produces  a positive-scalar-curvature 
metric on their connected sum. 
\end{proof} 


\noindent 
Finally, we  conclude our argument by  proving assertion \eqref{terzo}:

\begin{prop}  
\label{tre} 
The $5$-manifolds $M\times S^1$ and $N\times S^1$ are diffeomorphic. 
\end{prop}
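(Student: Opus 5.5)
The plan is to prove that $M$ and $N$ are $h$-cobordant, and then apply the $s$-cobordism theorem in dimension $6$ after taking the product with $S^1$. The thanks to Lowell Jones in the introduction suggests exactly this route.

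\textbf{Step 1: an $h$-cobordism $W$ between $X\#\cpbar$ and $9\CP_2\#45\cpbar$.} Since $Y=X\#\cpbar$ is simply connected with odd, indefinite intersection form, it has the same intersection form as $9\CP_2\#45\cpbar$. By Wall's theorem, two smooth closed simply connected $4$-manifolds with isomorphic intersection forms are $h$-cobordant. Moreover, Wall's stabilization result, or Freedman's homeomorphism together with Wall's $h$-cobordism theorem, guarantees a smooth simply connected $5$-dimensional $h$-cobordism $W$ between them.

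\textbf{Step 2: add the $S^1\times S^3$ summands.} I will form the boundary connected sum of $W$ with $28$ copies of the product cobordism $(S^1\times S^3)\times[0,1]$, carried out along arcs running from one end to the other. The result $W'$ is an $h$-cobordism from $M$ to $N$ with fundamental group $F_{28}$, the free group on $28$ generators.

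\textbf{Step 3: cross with $S^1$.} Then $W'\times S^1$ is a $6$-dimensional $h$-cobordism between $M\times S^1$ and $N\times S^1$, with fundamental group $F_{28}\times\ZZ$. To conclude that the ends are diffeomorphic, I need the Whitehead torsion of this cobordism to vanish, which is the case if $\Wh(F_{28}\times\ZZ)=0$. This is the key point. The Bass--Heller--Swan formula gives
$$\Wh(G\times\ZZ)\cong \Wh(G)\oplus \widetilde K_0(\ZZ G)\oplus \text{Nil terms},$$
and for free groups $G$ all of these terms vanish by the results of Stallings and Waldhausen. Hence $\Wh(F_{28}\times\ZZ)=0$. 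Alternatively, one can use the product formula for torsion: $\tau(W'\times S^1)=\chi(S^1)\cdot i_*\tau(W')=0$, since $\chi(S^1)=0$. This second argument sidesteps the Whitehead group computation entirely, and it also explains why crossing with $S^1$ helps.

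\textbf{Step 4: apply the $s$-cobordism theorem.} The $s$-cobordism theorem holds in dimension $6\geq 6$, so $W'\times S^1$ is a product and $M\times S^1\cong N\times S^1$.

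I expect the main obstacle to be Step 1 together with the torsion bookkeeping. One must make sure a \emph{smooth} $h$-cobordism exists between the two simply connected manifolds; Wall's theorem provides this. One must also check that the boundary-sum construction in Step 2 really yields an $h$-cobordism, which requires checking via van Kampen and Mayer--Vietoris that both inclusions induce isomorphisms on $\pi_1$ and on homology with $\ZZ[F_{28}]$ coefficients. Once that is in place, the vanishing of the torsion is formal, by the product formula with $\chi(S^1)=0$.
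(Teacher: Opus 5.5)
Your proposal is correct. Steps 1, 2 and 4 coincide with the paper's argument:
\begin{itemize}
\item Wall's theorem alone supplies the smooth simply connected $h$-cobordism between $Y$ and $9\CP_2\#45\cpbar$; the aside about Freedman is unnecessary.
\item The $S^1\times S^3$ summands are inserted by removing tubular neighborhoods $D^4\times[0,1]$ of arcs joining the two ends and gluing in $(S^1\times S^3 - D^4)\times[0,1]$. This is not a boundary connected sum in the usual sense, but your phrase ``along arcs running from one end to the other'' shows you mean the right operation.
\item The $s$-cobordism theorem is then applied in dimension $6$.
\end{itemize}

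The real difference is in Step 3. The paper gets $\Wh(\mathsf F\times\ZZ)=0$ from the Farrell--Jones theorem, using the complete finite-area hyperbolic metric on a $29$-times punctured sphere.

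Your first route reaches the same vanishing by algebraic $K$-theory: Bass--Heller--Swan, plus Stallings for $\Wh(\mathsf F)=0$ and Waldhausen for $\widetilde K_0(\ZZ\mathsf F)$ and the Nil terms.

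Your second route is more economical. The product formula $\tau(f\times\mathrm{id}_{S^1})=\chi(S^1)\,i_*\tau(f)=0$ holds even though $S^1$ is not simply connected. To see this, filter the cellular chain complex of $\widetilde{W'}\times\mathbb R$ relative to $\widetilde M\times\mathbb R$ by the $0$-cell and the $1$-cell of $S^1$. The subcomplex and quotient are the base change of $C_*(\widetilde{W'},\widetilde M)$ to $\ZZ[\mathsf F\times\ZZ]$ and its shift by one, and their torsions cancel. So the product of any $h$-cobordism with $S^1$ is an $s$-cobordism, whatever the fundamental group. This needs no deep input and would survive replacing $S^1\times S^3$ by summands with messier fundamental groups. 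What the paper's route buys instead is the stronger fact $\Wh(\pi_1(M\times S^1))=0$, so every $h$-cobordism from $M\times S^1$ is a product.

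One correction to your commentary: in this example, killing torsion is not why crossing with $S^1$ helps. Since $\Wh(\mathsf F)=0$, the $5$-dimensional $W'$ is already an $s$-cobordism. So is the simply connected cobordism of Step 1, yet its ends $Y$ and $9\CP_2\#45\cpbar$ are not diffeomorphic. The factor $S^1$ is needed only to reach dimension $6$, where the smooth $s$-cobordism theorem holds.
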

\begin{proof}
A fundamental theorem due to Wall \cite{wall} asserts that if  two smooth simply-connected closed   $4$-manifolds are homotopy equivalent, 
then they are 
actually $h$-cobordant. For example,  this means that there is a smooth compact oriented $5$-manifold $V$ whose boundary, as an oriented but disconnected $4$-manifold,  is diffeomorphic to the 
disjoint union of $\overline{Y}$  and $9 \CP_2 \# 45 \cpbar$, and such  that the inclusion of either  boundary component into $V$ is a homotopy equivalence. 
Now choose $28$ disjoint embedded arcs in $V$  from $\overline{Y}$  to $9 \CP_2 \# 45 \cpbar$  that meet both 
boundary components transversely. Next, thicken these up into $28$ disjoint embedded copies of $D^4 \times [0,1]$, thereby  interpolating between $28$ embedded $4$-disks in each 
boundary component. Finally, remove the interior of each copy of $D^4 \times [0,1]$, and glue in a copy of $[(S^3 \times S^1) - D^4]\times [0,1]$ in its place. 
The resulting oriented $5$-manifold $W$ has boundary diffeomorphic to a disjoint union of $\overline{M}=\overline{Y \# 28 (S^3 \times S^1)}$ and $N= 9 \CP_2 \# 45 \cpbar  \# 28 (S^1 \times  S^3)$,
and  the inclusion of either boundary component into $W$ is a homotopy equivalence. 
Thus,  $W$ is an $h$-cobordism  between the oriented $4$-manifolds $M$ and $N$. 
In particular, 
$$
\pi_1(M)\cong \pi_1 (W)\cong \pi_1(N) \cong \underbrace{\ZZ \ast \cdots \ast \ZZ}_{28}=:\mathsf{F},
$$
where, for later convenience, we have introduced the  abbreviated notation $\mathsf{F}$ for the free group on $28$  generators.

This also  immediately implies  that  the $6$-manifold $W\times S^1$ is    an  $h$-cobordism between $M\times S^1$ and 
$N\times S^1$. We will now show that  $W\times S^1$ is in fact an $s$-cobordism, meaning that the inclusion
of either boundary component into $W$ is a {\em simple} homotopy equivalence. In principle, the obstruction to this \cite{kerv-scobo} is   
an element of the Whitehead group $\Wh (\pi_1 (W))=\Wh (  \mathsf{F} \times\ZZ )$, where 
we have once again used 
$\mathsf{F}$ to denote  the free group on $28$ generators. But  notice 
that $\mathsf{F}$ is also the fundamental group of $\CP_1 -\{ 29~pts\}$, and  that this $29$-times  punctured Riemann-sphere actually 
carries a family\footnote{A particularly simple example of such a metric can be 
obtained by  pulling back 
the hyperbolic metric on the thrice-punctured $2$-sphere via an unbranched cyclic cover of order $27$; the fact that this example
has area $54\pi$ 
 then becomes apparent to the naked eye, because the resulting hyperbolic surface 
has  an obvious  tiling by  $54$ ideal triangles.} of complete, finite-area metrics of Gauss curvature $K=-1$. 
However, a striking  result of Farrell and Jones \cite[Corollary 2]{farjo} says that if $\mathsf{G}$ is the fundamental group of 
a complete, finite-volume  Riemannian manifold of  pinched\footnote{That is, with sectional curvature in $[-b,-a]$ for some $b>a >0$.}
negative sectional curvature, then $\Wh (\mathsf{G}) = \Wh (\mathsf{G}\times \ZZ) =0$.
As an immediate consequence, we therefore have
$$
\Wh (\pi_1 (W))=\Wh (  \mathsf{F} \times\ZZ )=0,
$$
so that $W$ is therefore an $s$-cobordism. The $s$-cobordism theorem \cite{kerv-scobo,s-cobo} therefore guarantees that 
$W$ is diffeomorphic to $(M\times S^1) \times  [ 0,1]$,  and it therefore follows   that $M\times S^1$ and $N\times S^1$ are actually  diffeomorphic. 
\end{proof}

Since $N$ admits  metrics $g$ with $s >0$ by Proposition \ref{due}, it follows that $N\times S^1$ also  admits metrics, such as $\hat{g}= g+dt^2$, 
that have positive scalar curvature.
 But since  $M\times S^1$  is diffeomorphic to  $N\times S^1$ by Proposition \ref{tre}, it follows that $M\times S^1$ admits 
$s>0$ metrics, too. But since $M$ has $\chi (M) =0$ and does not carry metrics of positive scalar curvature by Proposition \ref{uno},
this example therefore fulfills all the desiderata promised by Theorem \ref{melon}. 

\section{Concluding Remarks}

 So far, we have just produced   one example  fulfilling all the condition promised by Theorem \ref{melon},
so one  might reasonably object that the Theorem actually  seems to   promise many of them. 
However, such examples do in fact  exist in considerable profusion.   For example,  we could let $X$ be any simply-connected 
minimal  complex surface of 
general type,  blow it up at a point  to guarantee that the resulting manifold is non-spin, and then
blow it up again, if needed, in order to  guarantee that the resulting non-minimal complex surface $Y$ has even
Euler characteristic $\chi (Y) =2k$. The resulting  $Y$ is then  $h$-cobordant  to 
(and, incidentally,  homeomorphic to)  $\ell  \CP_2 \# m \cpbar$ for appropriate  odd integers $\ell$ and $m$ with 
$\ell + m=2(k-1)$. If we now  set $M=Y \# k (S^1 \times S^3)$ and $N=\ell  \CP_2 \# m \cpbar \# k (S^1 \times S^3)$, 
the same arguments we used in \S \ref{core} then show that $M$ and $N$ are $h$-cobordant. We also similarly  have 
$$\mathscr{Y}(M) = \mathscr{Y}(Y) =  \mathscr{Y} (X)= -4\pi \sqrt{c_1^2 (X)} < 0,$$
where   $c_1^2 (X) > 0$ by of our assumption that $X$ is minimal and of general type  \cite{bpv}. (Again, 
the Yamabe minimax is unachieved on $M$, since we have   arranged that $\chi (M)=0$ and $\pi_2(M)\neq 0$,
which together  guarantee \cite{hit} that $M$ cannot admit an Einstein metric.)
On the other hand, the  Gromov-Lawson surgery argument  used in \S \ref{core}  similarly  shows that $N$ again admits metrics of positive scalar curvature. 
Finally,  replacing the  previous free group  $\mathsf{F}$ with 
$$\mathsf{F}:=  \underbrace{\ZZ \ast \cdots \ast \ZZ}_{k},$$
 which is  the fundamental group of $M$, $N$ and an $h$-cobordism $W$ between them, as well as  of the punctured $2$-sphere $\CP_1 -\{ (k+1)~pts\}$, which 
admits a complete $K=-1$ metric of area $2\pi (k-1) < \infty$. The previously-cited result of Farrell and Jones \cite{farjo} therefore
 once again shows that $$\Wh (\pi_1 (M\times S^1))=\Wh (\mathsf{F} \times \ZZ)=0,$$  so  that  $M\times S^1$ and $N\times S^1$
are   once again 
$s$-cobordant, and therefore  diffeomorphic. Taking $X$ to be any  smooth $c_1 < 0$ complete-intersection surface  in a weighted projective space
then certainly shows that infinitely many values of the Yamabe invariant $\mathscr{Y}(M)$ and the signature $\tau (M)$ are possible in this construction. 
More exotic constructions \cite{ruler} 
of minimal simply-connected complex surfaces of general type $X$ 
could instead be used, and this alone 
 begins to    indicate how we have only  scratched the surface of what is possible.

%
%

Finally, while the  surgical  result of Petean \cite{jp2} that we have used here is a  natural extension of 
the ideas  of Gromov and Lawson \cite{gvln}, and moreover leads to striking results 
\cite{jp3} in higher dimensions, 
the obstruction to positive scalar curvature  at play in these examples can also be  detected directly via an interesting  variation 
\cite{rio,lric,ozsz} on the standard Seiberg-Witten invariant. 
Readers interested in generalizing the present construction through the use of, say,    symplectic $4$-manifolds or properly elliptic complex surfaces 
should  therefore  by all means feel invited  to explore this compelling circle of  ideas.

\pagebreak 
%

\end{document}